\numberwithin{equation}{section}
\begin{document}
\newtheorem{assumption}{Assumption}
\newtheorem{proposition}{Proposition}
\newtheorem{definition}{Definition}
\newtheorem{lemma}{Lemma}
\newtheorem{theorem}{Theorem}
\newtheorem{observation}{Observation}
\newtheorem{remark}{Remark}
\newtheorem{corollary}{Corollary}

\title{Ellipticity of quantum mechanical Hamiltonians in the edge algebra}

\author{Heinz-J{\"u}rgen Flad$^\dag$ and Gohar Harutyunyan$^\ast$ \\
\ \\
$^\dag${\small Institut f\"ur Mathematik, Technische Universit\"at Berlin,
Stra{\ss}e des 17. Juni 136, D-10623 Berlin}\\
$^\ast${\small Institut f\"ur Mathematik, Carl von Ossietzky Universit\"at
Oldenburg, D-26111 Oldenburg}\\
}
\maketitle

\begin{abstract}
We have studied the ellipticity of quantum mechanical Hamiltonians, in particular
of the helium atom, in order to prove existence of a parametrix and corresponding
Green operator. The parametrix is considered in local neighbourhoods of coalescence
points of two particles. We introduce appropriate hyperspherical coordinates where
the singularities of the Coulomb potential are considered as embedded edge/corner-type
singularities. This shows that the Hamiltonian can be written as an edge/corner 
degenerate differential operator in a pseudo-differential operator algebra.
In the edge degenerate case, we prove the ellipticity of the Hamiltonian.
\end{abstract}

\section{Introduction}
A principal task of quantum chemistry is the development of many-particle models in
electronic structure theory which enable accurate predictions of molecular properties.
Despite tremendous achievements in this field, cf.~\cite{HJO}, considerable further efforts are necessary
in order to meet the standards of present-day experiments. In respect to this it is important
to know the asymptotic behaviour of these models near coalescence points of particles.
It is the purpose of our work to develop tools which help to deepen our understanding
of the asymptotic behaviour and can be eventually used to improve such models and corresponding
numerical solution schemes. The Coulomb singularities at coalescence points of particles
are treated as embedded conical, edge and corner singularities in the configuration space of electrons.
Our approach consists of a recursive construction of the parametrix and corresponding Green operator
of a nonrelativistic Hamiltonian. A peculiar feature of the singular pseudo-differential calculus is that
the Green operator encodes all the asymptotic information of the eigenfunctions of the Hamiltonian.
In our previous work \cite{FHSchSch}, we have explicitly calculated the asymptotic Green operator
of the hydrogen atom. A further important step is the application of our techniques to the helium
atom which represents the simplest nontrivial example of a many-electron system.
The present work sets the ground for an application of the singular pseudo-differential calculus
to the helium atom and demonstrates ellipticity of the Hamiltonian as an edge degenerate operator
in a local neigbourhood of the coalescence points of two particles. This proves existence of a parametrix
and corresponding Green operator in this case.

We study the Hamiltonian for a three-particle system corresponding to the helium
atom in the
Born-Oppenheimer approximation which means that the nucleus is kept fixed at the
origin. For the helium atom, the Hamiltonian is given by 
\begin{equation}
 H := -\frac{1}{2} \bigl( \Delta_1 + \Delta_2 \bigr)
 - \frac{2}{|{\bf x}_1|} - \frac{2}{|{\bf x}_2|} + \frac{1}{|{\bf x}_1 - {\bf x}_2|} ,
 \label{HeH}
\end{equation}
where ${\bf x}_1 := (x_1,x_2,x_3), {\bf x}_2 := (x_4,x_5,x_6)$ are vectors in
$\mathbb{R}^3$ denoting
the spatial coordinates of the two electrons. Furthermore we have used the
physicists notation
for the Laplacian in $\mathbb{R}^6$, i.e.,
\begin{equation}
 \Delta_1 + \Delta_2 := \frac{\partial^2}{\partial x^2_1} +
\frac{\partial^2}{\partial x^2_2} +
 \frac{\partial^2}{\partial x^2_3} + \frac{\partial^2}{\partial x^2_4} +
 \frac{\partial^2}{\partial x^2_5} + \frac{\partial^2}{\partial x^2_6} .
\label{Laplace}
\end{equation}
The Hamiltonian (\ref{HeH}) can be either considered as a bounded operator from
$H^1(\mathbb{R}^6)$ into $H^{-1}(\mathbb{R}^6)$,
or as an unbounded essentially self-adjoint operator on $L^2(\mathbb{R}^6)$ with
domain
$H^2(\mathbb{R}^6)$, see e.g.~Kato's monograph \cite{Kato}. We want to discuss this
operator
from the point of view of edge and corner degenerate operators \cite{Schulze02}.
Our main problem is to incorporate the singular Coulomb potentials in an
appropriate manner,
which lead to edge and corner like embedded singularities. These  are denoted as {\em electron-nuclear} ($e-n$), {\em
electron-electron} ($e-e$),
and {\em electron-electron-nuclear} ($e-e-n$) singularities.
We want to demonstrate
in the following that    $e-n$, $e-e$ and $e-e-n$ singularities correspond to edges and a corner, respectively, in our singular calculus.

\section{The helium atom from the point of view of singular analysis}
\subsection{Hierarchy of edge and corner singularities}
Our discussion starts with the configuration space $\mathbb{R}^6$ of two electrons,
which we
consider in the following as a stratified manifold with embedded edge and corner
singularities. Let us introduce
polar coordinates in $\mathbb{R}^6$ with radial variable
\begin{equation}
 t:= \sqrt{x_1^2 + x_2^2 + x_3^2 + x_4^2 + x_5^2 + x_6^2} .
\label{t-def}
\end{equation}
This means we can formally consider $\mathbb{R}^6$ as a conical manifold with
embedded conical singularity at the origin, i.e.,
\[
 \mathbb{R}^6 \equiv (S^5)^\Delta:=(\overline{\mathbb{R}}_+ \times S^5) / (\{0\} \times S^5) ,
\]
with base $S^5$.  Removal of this singular point defines an open
stretched cone
\[
 (S^5)^{\wedge} := \mathbb{R}_+ \times S^5 .
\]

To the singularities of the Coulomb potential
\[
|\boldsymbol{x}_1|=0,\quad |\boldsymbol{x}_2|=0, \quad |\boldsymbol{x}_1-\boldsymbol{x}_2|=0
\]
correspond  closed embedded disjoint submanifolds $Y_1, Y_2,$$ Y_3$ of $S^5$, which are homeomorphic to $S^2$ and can be considered as  edge singularities.

For any of the submanifolds
$Y_i$, $i=1,2,3$,
there exists a local neighbourhood $U_i$ on $S^5$ which is homeomorphic to a wedge
\[
 W_i = X^{\Delta}_i \times Y_i \ \ \mbox{with} \ X^{\Delta}_i :=
 (\overline{\mathbb{R}}_+ \times X_i) / (\{0\} \times X_i) ,
\]
where the base $X_i$ of the wedge is again homeomorphic to $S^2$. The associated
open stretched wedges
are
\[
 \mathbb{W}_i = X^{\wedge}_i \times Y_i \ \ \mbox{with} \ X^{\wedge}_i :=
\mathbb{R}_+ \times X_i .
\]

The conical manifold $(S^5)^\Delta$ considered with (local) wedges $W_i, i=1,2,3,$ on its basis $S
^5$ becomes a manifold with (higher order) corner singularity at the origin. 

\subsection{Local coordinates near singularities}
In order to interpret the singularities of the Coulomb potential as embedded
singularities in $\mathbb{R}^6$,
 we have to specify
local coordinates in appropriate neigbourhoods of  edges on $S^5$ which provide
local representations
of the Hamiltonian  in the classes of edge/corner degenerate differential operators. The existence of
such local coordinate systems
is not completely obvious. It turned out that such coordinates were known in the
physics literature for a
long time, cf.~the monographs \cite[pp.~1730ff]{MF53} and \cite[pp.~398ff]{MM65},
however not in the framework of the edge/corner degenerate calculus discussed below.

In a local neighbourhood $U_1$ of the $e-n$ singularity (i.e., $|\boldsymbol{x}_1|=0$ and therefore $|\boldsymbol{x}_2|=|\boldsymbol{x}_1-\boldsymbol{x}_2|=t)$, we define local hyperspherical coordinates
\begin{equation}
 \tilde{x}_1 = \sin\,r_1 \sin \theta_1 \cos \phi_1, \ \ \tilde{x}_2 = \sin\,r_1 \sin \theta_1
\sin \phi_1, \ \
 \tilde{x}_3 = \sin\,r_1 \cos \theta_1 ,
\label{polarX1}
\end{equation}
\begin{equation}
 \tilde{x}_4 = \cos\,r_1\sin \theta_2 \cos \phi_2, \ \ \tilde{x}_5 = \cos\,r_1\sin \theta_2
\sin \phi_2, \ \
 \tilde{x}_6 = \cos\,r_1 \cos \theta_2 ,
\label{polarY1}
\end{equation}
with respect to the projective coordinates $\tilde{x}_i := x_i/t$, $i=1,\ldots,6$,
on $S^5$.

One can consider (\ref{polarX1}) as polar coordinates on the stretched cone
$X^\wedge_1$,
with $r_1\in (0,\frac{\pi}{2}]$, $ \theta_1 \in(0,\pi)$, $\phi_1 \in[0, 2\pi)$. The remaining two angular
variables in (\ref{polarY1}),
with $ \theta_2 \in (0,\pi)$, $ \phi_2 \in [0, 2\pi)$, provide a spherical coordinate system on $Y_1$.
Similar coordinates can be constructed in a local neighbourhood of $U_2$.

For a neigbourhood of the $e-e$ cusp $U_3$, it is convenient to define center of
mass coordinates
\begin{gather*}
 z_1 = \frac{1}{\sqrt{2}} (x_1 - x_4), \ \ z_2 = \frac{1}{\sqrt{2}} (x_2 - x_5), \ \
 z_3 = \frac{1}{\sqrt{2}} (x_3 - x_6) \\
 z_4 = \frac{1}{\sqrt{2}} (x_1 + x_4), \ \ z_5 = \frac{1}{\sqrt{2}} (x_2 + x_5). \ \
 z_6 = \frac{1}{\sqrt{2}} (x_3 + x_6)
 \end{gather*}

We can now introduce with respect to $z_1,\ldots,z_6$ the same type of local polar
coordinates as
in the previous cases.

In the corner case, $t \rightarrow 0$, one can use the same coordinates, however, one has to consider $S^5$ globally, which means that the three local coordinate neighbourhoods form an atlas on the manifold.

\subsection{Metric tensor, Laplace-Beltrami operator and the Coulomb potential}
In this section we want to discuss the metric tensor, corresponding Laplace-Beltrami operator\footnote{ 
A basic reference for these differential geometric quantities is \cite{G63}.}
and the singular Coulomb potential in the edge and corner degenerate case.

The Riemannian metric on the five-sphere $S^5$ is induced by the Euclidean metric in
$\mathbb{R}^6$, i.e.,
\[
 ds^2 = \sum_{i=1}^6 dx_i^2 .
\]
Introducing the radial coordinate $t$, cf.~(\ref{t-def}), yields a metric of the form
\begin{equation}
 ds^2 = dt^2 + t^2 d\tilde{s}^2
\label{cornermetric}
\end{equation}
with
$
 d\tilde{s}^2 $
the induced metric on $S^5$.

For the previously defined polar coordinates on $S^5$ in a neigbourhood of
the edge $|\boldsymbol{x}_1|=0$
  the metric becomes
\begin{eqnarray}\label{edgemetric}
 d\tilde{s}^2  =  dr_1^2 + \sin^2 r_1 ( \underbrace{d\theta_1^2 + \sin^2 \theta_1 d
\phi_1^2 }_{=:g_{X_1}}) +
 \cos^2 r_1 ( \underbrace{d\theta_2^2 + \sin^2 \theta_2 d \phi_2^2}_{=:g_{Y_1}} ),
\end{eqnarray}
which has the form of a wedge metric\footnote{Note that a wedge metric has the general form
$
d r^2+r^2 g_X(r,y)+dy^2
$
where $r$ and $y$ denote the axial and edge variables, respectively, and $g_X$ is  a Riemannian metric on $X$, depending smoothly on $y$ and $r$ up to $r=0$.}.

Inserting (\ref{edgemetric}) in (\ref{cornermetric}) one gets the corresponding metric tensor in $\mathbb{R}^6$:
\begin{equation}\label{edgecornermetric}
 ds^2 = dt^2 + t^2 [  dr_1^2 + \sin^2 r_1 g_{X_1} +
 \cos^2 r_1 g_{Y_1}].
\end{equation}

 In the following we have to distinguish between the edge and corner degenerate case:

 The metric (\ref{edgecornermetric}) is already in the appropriate form for a corner degeneracy\footnote{
 Note that a corner metric in general is of the form $dt^2 +t^2(dr^2 +r^2 g_X(t,r,y)+dy^2)$ where $t$ and $r$ denote the corner and cone axial variables, respectively, $y$ the edge variable and $g_X$ is a Riemannian metric on $X$, depending smoothly on $y$ and $t,r$ up to $t=r=0$.}, however,
 in the edge degenerate case $t$ plays the role of an edge variable with values in an open interval $(a,b)\in\mathbb{R}_+$ for $a>0$.  The corresponding edge metric (\ref{edgecornermetric}) takes the form
 \[
 t^2  d r_1^2+t^2 \sin^2 r_1 g_{X_1}+\underbrace{dt^2+t^2 \cos^2 r_1 g_{Y_1}}_{\text{metric on the edge}}.
 \]

The Laplacian (\ref{Laplace}) in $\mathbb{R}^6$ can be expressed in hyperspherical coordinates according to
\begin{equation}\label{delta12}
\Delta_1+\Delta_2=t^{-2}[(-t\partial_t)^2-4(-t\partial_t)+\tilde{\Delta}]
\end{equation}
where $\tilde{\Delta}$ denotes the Laplace-Beltrami operator on $S^5$ which is given near the edge $|\boldsymbol{x}_1|=0$ by
\begin{equation}\label{tildedelta}
\tilde{\Delta}=r_1^{-2}[(-r_1\partial_{r_1})^2+h(r_1)(-r_1\partial_{r_1})+\frac{r_1^2}{\sin^2 r_1}\Delta_{X_1}+
\frac{r_1^2}{\cos^2 r_1}\Delta_{Y_1}]
\end{equation}
with  Laplace-Beltrami operators $\Delta_{X_1}$ and $\Delta_{Y_1}$ on unit
spheres\footnote{For example, denoting the spherical coordinates on $S^2 \approx X_1$ by $(\theta_1,\phi_1)$ we have
\[
\Delta_{X_1}=\partial^2 _{\theta_1}+\mbox{ctan} \, \theta_1\partial_{\theta_1}+\frac{1}{\sin^2 \theta_1} \partial^2_{\phi_1}.
\]}
 $S^2\approx X_1$ and $S^2\approx Y_1,$ respectively, and
\[
1+2r_1\mbox{tan} \, r_1-2r_1\mbox{ctan}\, r_1=:h(r_1)\in C^\infty \big([0,\pi/2)\big).
\]

Finally, we have to consider the Coulomb potential in hyperspherical coordinates.

In a neighbourhood of the edge $|\boldsymbol{x}_1|=0$, the Coulomb potential $V$ is given in the following form
\begin{equation}\label{potential}
V := -\frac{2}{|\boldsymbol{x}_1|}-\frac{2}{|\boldsymbol{x}_2|}+\frac{1}{|\boldsymbol{x}_1-\boldsymbol{x}_2|}
=t^{-1} r_1^{-1} v(r_1,\theta_1,\phi_1,\theta_2,\phi_2)
\end{equation}
with
\[
 v = -\frac{2 r_1}{\sin r_1} -\frac{2 r_1}{\cos r_1} +
 \frac{ r_{1}}{\sqrt{1-\sin(2r_{1})
 [ \cos \theta_1 \cos \theta_2 + \sin \theta_1 \sin \theta_2 \cos(\phi_1-\phi_2)]}}.
\]
For the singular calculus discussed below it is important to note that $v$ is smooth with respect to $r_1$ up to $r_1=0.$

In this section explicit formulas are given for the edge $|\boldsymbol{x}_1|=0$; analogous expressions can be given for the remaining $e-n$ and $e-e$ singularities.

\subsection{Fuchs type differential operators for edge and corner singularities}
The Fuchs type differential operator of order $m\in\mathbb{N}$ corresponding to the edge degenerate case has the general form
\begin{equation}\label{*}
A=r^{-m}\sum_{j+|\alpha|\leq m} a_{j\alpha}(r,y)(-r\partial_r)^j (rD_y)^\alpha.
\end{equation}
Here $r\in\mathbb{R}_+$ denotes the distance to the edge and $y$ is a $q$-dimensional variable, varying on an edge, say, $\Omega\subset \mathbb{R}^q$ open. The coefficients $a_{j\alpha}(r,y)$ take values in differential operators of order $m-(j+|\alpha|)$ on the base $X$ of the cone (which is in our case a sphere) and are smooth in the respective variables up to $r=0$.

The Fuchs type differential operator of order $m\in\mathbb{N}$ corresponding to the corner degenerate case has the general form
\begin{equation}\label{**}
A=t^{-m}\sum_{k=0}^m b_k(t) (-t\partial_t)^k.
\end{equation}
Here $t\in\mathbb{R}_+$ denotes the distance to the corner point and the coefficients $b_k(t)$ have values in operators of the form (\ref{*}) for any $t\in\mathbb{R}_+$. Inserting into (\ref{**}) the values of $b_k(t)$ it becomes degenerate with respect both $t$ and $r$ close to zero; for instance, we obtain
\begin{equation}\label{***}
A=t^{-m}r^{-m}\sum_{k+j+|\alpha|\leq m} a_{kj\alpha}(t,r,y)(-r\partial_r)^j (rD_y)^\alpha (-rt\partial_t)^k
\end{equation}
with coefficients $a_{kj\alpha}(t,r,y)$ smooth up to $t=r=0.$

\subsection{The Hamiltonian as an edge/corner degenerate operator}
In the previous section we have discussed some formal aspects of edge/corner degenerate differential operators. It is now straightforward to represent the Hamiltonian of the helium atom in edge and corner degenerate form using the results of Section 2.3.

Let us first consider the corner degenerate case which can be obtained by inserting (\ref{tildedelta}) in (\ref{delta12}) and adding the potential (\ref{potential}):
\begin{eqnarray}
H|_{U_1} & = & t^{-2}r_1^{-2}\Big[ -\frac{1}{2}(-r_1 t \partial_t)^2+2r_1(-r_1 t \partial_t)-\frac{1}{2}(-r_1\partial_{r_1})^2-\frac{h(r_1)}{2}(-r_1\partial_{r_1})\Big.\nonumber\\
& & -\frac{1}{2\cos^2 r_1}(r_1\partial_{\theta_2})^2-\frac{r_1 \mbox{ctan} \, \theta_2}{2\cos^2 r_1}(r_1\partial_{\theta_2})-\frac{1}{2\sin^2 \theta_2 \cos^2 r_1}(r_1\partial_{\phi_2})^2\nonumber\\
& & - \Big. \frac{r_1^2}{2\sin^2 r_1}\Delta_{X_1}+tr_1 v\Big] . \label{cornercase}
\end{eqnarray}
It is easy to see that (\ref{cornercase}) is already of the corner degenerate form (\ref{***}).

In the edge degenerate case $t$ becomes an edge variable. Accordingly one has to rearrange  the Hamiltonian
(\ref{cornercase}) in order to get an expression of the form (\ref{*}), i.e.,
\begin{eqnarray}
 H|_{U_1} & = & r^{-2}_1 \Big[ -\frac{1}{2 t^2} (- r_1
{\partial_{r_1}} )^2
 - \frac{h(r_1)}{2t^2} (- r_1 {\partial_{r_1}})
 -\frac{1}{2} (r_1 {\partial_t} )^2
 - \frac{5 r_1}{2t} (r_1 {\partial_t} )  \nonumber\\
 & & - \frac{1}{2t^2 \cos^2 r_1} (r_1 {\partial_{\theta_2}}
)^2
 - \frac{r_1 \mbox{ctan} \, \theta_2}{2t^2 \cos^2 r_1} (r_1 {\partial_
{\theta_2}} )
- \frac{1}{2t^2 \sin^2 \theta_2 \cos^2 r_1} (r_1 {\partial_
{\phi_2}} )^2 \nonumber\\
 & & - \frac{r_1^2}{2t^2 \sin^2 r_1} \Delta_{X_1} + \frac{r_1}{t} v \Big] .\label{He1n.edge}
\end{eqnarray}

\section{Ellipticity of the Hamiltonian in the edge degenerate case}
In the present work we restrict us to the edge degenerate case. Our main result presented in this section is the ellipticity of the Hamiltonian operator (\ref{He1n.edge}) as an operator in the edge algebra. This requires a symbolic hierarchy which will be briefly discussed in the following Section 3.1.
\subsection{Symbolic hierarchy and ellipticity in the edge degenerate sense}

The ellipticity of a differential operator on a manifold with edge singularity, cf.~(\ref{*}), (with a closed compact cone basis) is characterized by a pair of symbols
\[
\big(\sigma_\psi, \sigma_\wedge\big),
\]
where $\sigma_\psi$ is the usual homogeneous principal symbol of the  operator and $\sigma_\wedge$ denotes the so-called \emph{principal edge} symbol. For a detailed outline of the theory of ellipticity we refer to the monographs \cite{harutyunyanschulze} or \cite{Schulze98}.

The ellipticity conditions of an operator in the form (\ref{*}) with respect to $\sigma_\psi$ are
\begin{itemize}
\item  ellipticity in the usual sense outside of the singularity, i.e.,
\[
\sigma_\psi(A)(r,x,y,\rho,\xi,\eta)\not=0\quad\text{for all}\quad (r,x,y)\in X^\wedge\times \Omega\quad\text{and}\quad (\rho,\xi,\eta)\not=0;
\]
\item
\[
\tilde{\sigma}_\psi(A)(r,x,y,\rho,\xi,\eta)=r^m\sigma_\psi(A)(r,x,y,r^{-1}\rho,\xi,r^{-1}\eta)\not=0\quad\text{up to}\quad r=0.
\]
\end{itemize}

The second symbol $\sigma_\wedge(A),$ defined by
\[
\sigma_\wedge(A)(y,\eta):=r^{-m}\sum_{j+|\alpha|\leq m} a_{j\alpha}(0,y)(-r\partial_r)^j (r\eta)^\alpha,
\]
corresponds to a parameter-dependent operator family in the cone algebra. Ellipticity with respect to $\sigma_\wedge$ requires that $\sigma_\wedge(A)$ represents isomorphisms between weighted Sobolev spaces $\mathcal{K}^{s,\gamma}(X^\wedge)$
\footnote{In the particular case $X=S^n$ (which is the case in our application) the $\mathcal{K}^{s,\gamma}(X^\wedge)$ spaces are defined as
\[
\mathcal{K}^{s,\gamma}(X^\wedge):=\omega \mathcal{H}^{s,\gamma}(X^\wedge)+(1-\omega)H^s(\mathbb{R}^{1+n})
\]
for a cut-off function $\omega$ (i.e., $\omega\in C_0^\infty(\overline{\mathbb{R}}_+)$ such that $\omega(r)=1$ near $r=0$); here $\mathcal{H}^{s,\gamma}(X^\wedge)$ for $s\in\mathbb{N}_0$ is defined to be the set of all $u(r,x)\in r^{\gamma-\frac{n}{2}}L^2 (X^\wedge)$ such that $(r\partial_r)^j D_j u\in r^{\gamma-\frac{n}{2}}L^2(X^\wedge)$ for all $D_j\in \mathrm{Diff}^{s-j}(X), 0\leq j\leq s.$ The definition for $s\in\mathbb{R}$ follows by duality and complex interpolation.}:
\[
\sigma_\wedge(A)(y,\eta):\mathcal{K}^{s,\gamma}(X^\wedge)\rightarrow \mathcal{K}^{s-m,\gamma-m}(X^\wedge)
\]
for any $y\in\Omega, \eta\not=0.$
\subsection{Ellipticity of the Hamiltonian}
In our proof of the ellipticity of the Hamiltonian of the helium atom in the edge degenerate case, it is sufficient
to consider the $e-n$ singularity ${\bf x}_1=0$. The proofs in the remaining two cases are completely analogous.
In order to study the symbolic structure of the operator (\ref{He1n.edge}), we assign to variables
\[
r_1,\quad t,\quad (\theta_1,\phi_1),\quad (\theta_2,\phi_2)
\]
the covariables
\[
\rho,\quad \tau,\quad (\Theta_1,\Phi_1),\quad (\Theta_2,\Phi_2).
\]
The corresponding principal symbols are given by
\begin{eqnarray*}
\sigma_\psi(H|_{U_1}) & = & r_1^{-2}\Big[ \frac{(r_1 \rho)^2}{2t^2}+\frac{(r_1\tau)^2}{2}+\frac{(r_1 \Theta_2)^2}{2t^2 \cos^2 r_1}+\frac{(r_1\Phi_2)^2}{2t^2 \sin^2 \theta_2 \cos^2 r_1}\\ & & +\frac{r_1^2 \Theta_1^2}{2t^2 \sin^2 r_1}+\frac{r_1^2 \Phi_1^2}{2t^2 \sin^2 r_1 \sin^2 \theta_1}\Big];\\
\tilde{\sigma}_\psi (H|_{U_1}) & = & \frac{\rho^2}{2t^2}+\frac{\tau^2}{2}+\frac{\Theta_2^2}{2t^2 \cos^2 r_1}+\frac{\Phi_2^2}{2t^2 \sin^2 \theta_2 \cos^2 r_1}+\frac{r_1^2 \Theta_1^2}{2t^2 \sin^2 r_1}\\ & & +\frac{r_1^2 \Phi_1^2}{2t^2 \sin^2 r_1 \sin^2 \theta_1};\\
\sigma_\wedge(H|_{U_1}) & = & r_1^{-2}\Big[ -\frac{1}{2t^2}(-r_1\partial_{r_1})^2 +
\frac{1}{2t^2}(-r_1\partial_{r_1})+\frac{(r_1 \tau)^2}{2}+
\frac{(r_1 \Theta_2)^2}{2t^2}\\ & & +\frac{(r_1 \Phi_2)^2}{2t^2\sin^2 \theta_2}-\frac{\Delta_{X_1}}{2t^2}\Big],
\end{eqnarray*}
and ellipticity conditions with respect to the symbols $\sigma_\psi, \tilde{\sigma}_\psi$ are obviously satisfied.

In the following it is crucial to find values of $\gamma$ for which $\sigma_\wedge(H|_{U_1})$ is an isomorphism between the corresponding $\mathcal{K}^{s,\gamma}((S^2)^\wedge)$ spaces\footnote{Although we restrict the Hamiltonian to a bounded 
neighbourhood of the edge singularity, our notion of ellipticity requires a natural extension of $\sigma_\wedge$ 
to the infinite open stretched cone $(S^2)^\wedge$, cf.~\cite{Schulze98}.}
That such values actually exist is the result of Lemma \ref{lem1} below\footnote{We want to emphasize  that contrary to the general ellipticity theory developed in \cite{Schulze98} or \cite{harutyunyanschulze}, we do not want to consider possible (finite-dimensional) extensions of the Hamiltonian operator in order to achieve bijectivity for $\sigma_\wedge$.}. In the proof of this lemma we need the following result:

\begin{lemma}
The principal conormal symbol\footnote{The {\em principal conormal} symbol corresponding to a Fuchs type operator of the form
$A = r^{-m} \sum_{j=0}^m a_j(r)(-r\partial_r)^j$ is defined as $\sigma_c(A) (w) = \sum_{j=0}^m a_j(0) w^j$.} 
of $\sigma_\wedge(H|_{U_1})$ as an elliptic operator
$\sigma_c(\sigma_\wedge(H|_{U_1}))(w): H^s(S^2) \rightarrow H^{s-2}(S^2)$
induces an isomorphism between the Sobolev spaces  provided $w\notin
\mathbb{Z}$.
\label{lemma1}
\end{lemma}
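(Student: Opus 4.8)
The plan is to make the statement completely explicit and reduce it to the spectral theory of the Laplace--Beltrami operator on $S^2$. Using the definition of $\sigma_c$ recalled in the footnote together with the formula for $\sigma_\wedge(H|_{U_1})$ displayed above, I would first write $\sigma_\wedge(H|_{U_1}) = r_1^{-2}\sum_{j=0}^2 a_j(r_1)(-r_1\partial_{r_1})^j$ with coefficients $a_j$ taking values in differential operators on $X_1\approx S^2$, and read off $a_2(0) = -\tfrac{1}{2t^2}$, $a_1(0) = \tfrac{1}{2t^2}$ (this value reflecting $h(0)=-1$), $a_0(0) = -\tfrac{1}{2t^2}\Delta_{X_1}$; the three covariable terms $\tfrac{(r_1\tau)^2}{2}$, $\tfrac{(r_1\Theta_2)^2}{2t^2}$, $\tfrac{(r_1\Phi_2)^2}{2t^2\sin^2\theta_2}$ carry an explicit factor $r_1^2$ and so do not contribute at $r_1=0$. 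This gives
\[
\sigma_c\big(\sigma_\wedge(H|_{U_1})\big)(w) \;=\; \frac{1}{2t^2}\Big(-w^2 + w - \Delta_{X_1}\Big) \;=\; \frac{1}{2t^2}\Big(-\Delta_{X_1} - w(w-1)\Big),
\]
and since $\tfrac{1}{2t^2}$ is a positive constant (for $t$ in the relevant interval), it suffices to analyse $p(w) := -\Delta_{X_1} - w(w-1)$ as a map $H^s(S^2)\to H^{s-2}(S^2)$.

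Next I would observe that for every fixed $w\in\mathbb{C}$ the operator $p(w)$ has the same principal symbol as $-\Delta_{X_1}$, hence is elliptic on the closed manifold $S^2$; it is therefore Fredholm of index $0$ between the indicated Sobolev spaces, and an isomorphism if and only if it is injective. By elliptic regularity every element of $\ker p(w)$ is smooth, so injectivity can fail only if $w(w-1)$ lies in the spectrum of the self-adjoint operator $-\Delta_{X_1}$ on $L^2(S^2)$, which is $\{\ell(\ell+1):\ell\in\mathbb{N}_0\}$ with eigenspaces spanned by spherical harmonics; conversely each such value yields a genuine kernel element. Hence $p(w)$ is an isomorphism exactly when $w(w-1)\notin\{\ell(\ell+1):\ell\in\mathbb{N}_0\}$.

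Finally I would solve $w(w-1)=\ell(\ell+1)$: the quadratic $w^2-w-\ell(\ell+1)=0$ has discriminant $(2\ell+1)^2$ and the two roots $w=\ell+1$ and $w=-\ell$. As $\ell$ runs through $\mathbb{N}_0$ these roots exhaust $\{1,2,3,\dots\}\cup\{0,-1,-2,\dots\}=\mathbb{Z}$, and conversely every integer $w$ gives $w(w-1)\in\{0,2,6,12,\dots\}=\{\ell(\ell+1):\ell\in\mathbb{N}_0\}$. Combining this with the previous step shows that $\sigma_c(\sigma_\wedge(H|_{U_1}))(w)\colon H^s(S^2)\to H^{s-2}(S^2)$ is an isomorphism precisely for $w\notin\mathbb{Z}$, which is the assertion.

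I do not expect a genuine obstacle here: the argument is essentially the explicit diagonalisation of a shifted Laplacian on $S^2$. The points that need care are the bookkeeping in extracting the conormal symbol --- in particular the vanishing at $r_1=0$ of the covariable-dependent terms and the value $h(0)=-1$ behind the first-order coefficient --- and the sign convention, namely that $\Delta_{X_1}$ here denotes the \emph{negative}-semidefinite Laplace--Beltrami operator, so that its spectrum is $\{-\ell(\ell+1)\}$ and the non-bijectivity points of the conormal symbol appear as $\ell+1$ and $-\ell$, whose union over $\ell\in\mathbb{N}_0$ is exactly $\mathbb{Z}$.
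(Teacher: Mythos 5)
Your argument is correct: you extract the conormal symbol $\sigma_c(\sigma_\wedge(H|_{U_1}))(w) = \tfrac{1}{2t^2}\bigl(-\Delta_{X_1} - w(w-1)\bigr)$ correctly (in particular noting $h(0)=-1$ and that the covariable terms carry an explicit $r_1^2$), and the reduction to the spectrum $\{l(l+1)\}$ of $-\Delta_{X_1}$ with the roots $w=l+1$, $w=-l$ sweeping out $\mathbb{Z}$ is exactly right. The paper itself supplies no proof here but merely cites \cite[Lemma 1]{FHSchSch}; that reference treats the hydrogen atom, where the conormal symbol has the same structure $-w^2+w-\Delta_{S^2}$ up to a harmless prefactor, so your computation is essentially the argument the paper has in mind.
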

A simple proof is given in \cite[Lemma 1]{FHSchSch}.

\begin{lemma}\label{lem1}
The principal edge symbol $\sigma_\wedge(H|_{U_1})$ defines Fredholm operators
\[
\sigma_\wedge(H|_{U_1})(t,(\theta_2,\phi_2),\tau, (\Theta_2,\Phi_2)):\mathcal{K}^{s,\gamma}((S^2)^\wedge)\rightarrow \mathcal{K}^{s-2,\gamma-2}((S^2)^\wedge)
\]
for $\gamma\notin \mathbb{Z}+\frac{1}{2},$ for all $(t,(\theta_2,\phi_2))\in\Omega\times S^2$ and all $(\tau, (\Theta_2,\Phi_2))\not=0$.

Furthermore, it represents isomorphisms for any $\gamma$ with $\frac{1}{2}<\gamma<\frac{3}{2}.$
\end{lemma}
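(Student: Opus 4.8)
The plan is to first reduce the principal edge symbol to a completely classical operator. Using $-(-r_1\partial_{r_1})^2+(-r_1\partial_{r_1})=-r_1^2\partial_{r_1}^2-2r_1\partial_{r_1}$ and multiplying through by $r_1^{-2}$, the radial part together with the $\Delta_{X_1}$ term of $\sigma_\wedge(H|_{U_1})$ becomes exactly $-\Delta_{\mathbb{R}^3}$, the Euclidean Laplacian on $(S^2)^\wedge\cong\mathbb{R}^3$ written in polar coordinates with radial variable $r_1$, while the three remaining terms carrying $\tau,\Theta_2,\Phi_2$ collapse, after cancelling the factors $r_1^{\pm2}$, into the single constant $\lambda^2/(2t^2)$ with
\[
\lambda^2:=t^2\tau^2+\Theta_2^2+\tfrac{\Phi_2^2}{\sin^2\theta_2},
\]
the squared length of the edge covariable $(\tau,(\Theta_2,\Phi_2))$ in the edge metric, which is strictly positive precisely because $(\tau,(\Theta_2,\Phi_2))\neq0$. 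Hence $\sigma_\wedge(H|_{U_1})=\tfrac{1}{2t^2}\bigl(-\Delta_{\mathbb{R}^3}+\lambda^2\bigr)$, a Helmholtz/Yukawa type operator; in particular the Coulomb potential drops out of the principal edge symbol, so this is essentially the edge symbol already treated for the hydrogen atom in \cite{FHSchSch}.

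For the Fredholm statement I would invoke the Fredholm criterion of the cone calculus on $(S^2)^\wedge$ (cf.~\cite{Schulze98,harutyunyanschulze}) and check its ingredients. Interior ellipticity is clear from $|\xi|^2\neq0$. The behaviour as $r_1\to\infty$ is governed by $|\xi|^2+\lambda^2$, which is nonvanishing for all $\xi$ exactly because $\lambda^2>0$; this is the one step where the hypothesis $(\tau,(\Theta_2,\Phi_2))\neq0$ is indispensable, since $-\Delta_{\mathbb{R}^3}$ alone is not Fredholm on these spaces. Finally the principal conormal symbol of $\sigma_\wedge(H|_{U_1})$ is $\sigma_c(w)=\tfrac{1}{2t^2}\bigl(w(1-w)-\Delta_{X_1}\bigr)$, which by Lemma~\ref{lemma1} is an isomorphism $H^s(S^2)\to H^{s-2}(S^2)$ whenever $w\notin\mathbb{Z}$; since this is required on the weight line $\operatorname{Re}w=\tfrac32-\gamma$ (recall $\dim X_1=2$), it holds precisely for $\tfrac32-\gamma\notin\mathbb{Z}$, i.e.\ $\gamma\notin\mathbb{Z}+\tfrac12$. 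This yields the Fredholm property for every admissible parameter value.

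It then remains to compute kernel and cokernel of $\sigma_\wedge(H|_{U_1}):\mathcal{K}^{s,\gamma}((S^2)^\wedge)\to\mathcal{K}^{s-2,\gamma-2}((S^2)^\wedge)$ for $\tfrac12<\gamma<\tfrac32$. By interior elliptic regularity every element of the kernel is smooth away from the origin, and separation of variables in spherical harmonics on $X_1=S^2$ reduces each $\ell$ mode to a modified Bessel equation with the two solutions $r_1^{-1/2}I_{\ell+1/2}(\lambda r_1)\sim r_1^{\ell}$ and $r_1^{-1/2}K_{\ell+1/2}(\lambda r_1)\sim r_1^{-\ell-1}$ near $r_1=0$, growing resp.\ decaying exponentially as $r_1\to\infty$. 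Membership in $\mathcal{K}^{s,\gamma}$ near infinity excludes the $I$ branch for every $\ell$, and membership near $r_1=0$ forces $-\ell-1>\gamma-\tfrac32$, i.e.\ $\gamma<\tfrac12-\ell$, which fails for every $\ell\ge0$ once $\gamma>\tfrac12$; hence the kernel is trivial for all $\gamma>\tfrac12$. For the cokernel I would use $\mathcal{K}^{0,0}((S^2)^\wedge)=L^2(\mathbb{R}^3)$, the formal self-adjointness of $-\Delta_{\mathbb{R}^3}+\lambda^2$ for this scalar product, and the duality $(\mathcal{K}^{s,\gamma})^*=\mathcal{K}^{-s,-\gamma}$, so that the cokernel is isomorphic to the kernel of the same operator on $\mathcal{K}^{2-s,2-\gamma}$; since $\tfrac12<\gamma<\tfrac32$ gives $\tfrac12<2-\gamma<\tfrac32$, the kernel triviality just proved (now at the weight $2-\gamma>\tfrac12$) makes this cokernel vanish. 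Thus $\sigma_\wedge(H|_{U_1})$ is injective and surjective, i.e.\ an isomorphism, exactly on this interval.

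The principal difficulty is the surjectivity half: the Fredholm index is only locally constant in $\gamma$, so one has to genuinely identify the cokernel, and the duality argument above closes only because the dual weight $2-\gamma$ happens to land back inside $(\tfrac12,\tfrac32)$ — which is also why this is the sharp range, as for $\gamma$ just above $\tfrac32$ the $\ell=0$ branch $r_1^{-1}$ enters the kernel of the adjoint and surjectivity is lost. The remaining effort is careful bookkeeping of exponents: the critical weights being exactly $\mathbb{Z}+\tfrac12$ is forced by $\dim X_1=2$ through the thresholds $r_1^p\in\mathcal{H}^{s,\gamma}$ iff $p>\gamma-\tfrac32$ and $\operatorname{Re}w=\tfrac32-\gamma$ on the conormal line, and one must keep the factor $\tfrac{1}{2t^2}$ and the positivity of $\lambda^2$ in view throughout.
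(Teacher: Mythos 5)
Your proposal is correct and takes essentially the same route as the paper: you rewrite $\sigma_\wedge(H|_{U_1})=\tfrac{1}{2t^2}\bigl(-\Delta_{\mathbb{R}^3}+\lambda^2\bigr)$ with $\lambda^2=-C>0$, deduce the Fredholm property from invertibility of the conormal symbol on the weight line $\operatorname{Re}w=\tfrac{3}{2}-\gamma$ (Lemma \ref{lemma1}) together with the exit conditions at $r_1\to\infty$, and prove the isomorphism for $\tfrac12<\gamma<\tfrac32$ by the same separation of variables into modified Bessel equations plus the formal self-adjointness/duality argument identifying the cokernel with the kernel at the dual weight $2-\gamma$. The only differences are cosmetic: you make the weight-line bookkeeping explicit and exclude the $K_{l+\frac12}$-branch already at the weighted $L^2$ level, where the paper uses the asymptotic expansions and derivative recursions of the Bessel functions and records the kernels for general $\gamma$.
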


\begin{proof}
The proof is given in two steps: first we show the Fredholm property of $\sigma_\wedge$ for any $\gamma\notin\mathbb{Z}+\frac{1}{2}$, and then we prove  the isomorphy for $\frac{1}{2}<\gamma<\frac{3}{2}$, which is a central point for our application.

The Fredholm property for $\gamma \notin \mathbb{Z} + \frac{1}{2}$ is an immediate
consequence of Lemma \ref{lemma1} and \cite[Section 8.2.5 Theorem 8]{ES97}. It only remains to check the
exit behaviour of $\sigma_\wedge(H|_{U_1})$
required by this theorem. To this end $\sigma_\wedge(H|_{U_1})$\footnote{In the following it is convenient  
to write the principal edge symbol as
\[
 \sigma_\wedge(H|_{U_1}) = - \frac{1}{2t^2} \left[ \partial^2_{r_1} + \frac{2}{r_1} \partial_{r_1}
 + \frac{\Delta_{X_1}}{r_1^2} + C(t,\tau,\Theta_2,\Phi_2) \right] ,
\]
where $C(t,\tau,\Theta_2,\Phi_2) := - (t \tau)^2 - \Theta_2^2 - \frac{\Phi_2^2}{\sin^2 \theta_2}$.}
can be pushed forward to a differential operator
$\bar{\sigma}_\wedge(H|_{U_1}) = - \frac{1}{2t^2} \left( \Delta +C \right)$ in
$\mathbb{R}^3$.
The corresponding exit symbols
\[
 \sigma_e(\bar{\sigma}_\wedge(H|_{U_1})) = \frac{1}{2t^2} \left( \xi_1^2 + \xi_2^2 +
\xi_3^2 -C \right) \ \ \mbox{and} \
 \sigma_{\psi,e}(\bar{\sigma}_\wedge(H|_{U_1})) = \frac{1}{2t^2} \left( \xi_1^2 + \xi_2^2 +
\xi_3^2 \right) ,
\]
have constant coefficients and satisfy for $(\tau,\Theta_2,\Phi_2) \neq 0$ the
required exit conditions, i.e.,
\[
 \sigma_e(\bar{\sigma}_\wedge(H|_{U_1})) \neq 0 \ \ \mbox{for all} \ \xi \in \mathbb{R}^3 \
\ \mbox{and} \
 \sigma_{\psi,e}(\bar{\sigma}_\wedge(H|_{U_1})) \neq 0 \ \ \mbox{for all} \ \xi \in
\mathbb{R}^3 \setminus \{0\} .
\]

In order to establish the isomorphism, we have to calculate the dimension of the
kernel and cokernel.
A nontrivial element $u \in \mathcal{K}^{s,\gamma}(X^\wedge)$ of the kernel of
$\sigma_\wedge(H|_{U_1})$ satisfies
the equation
\[
 \left[ \partial_{r_1}^2 + \frac{2}{r_1} \partial_{r_1}
 + \frac{\Delta_{X_1}}{r_1^2} + C(t,\tau,\Theta_2,\Phi_2) \right] u =0 .
\]
According to \cite[Section 8.2.5 Theorem 8]{ES97}, $u$ belongs to $\mathcal
{K}^{\infty,\gamma}((S^2)^\wedge)$
and therefore we have $u \in C^\infty((S^2)^\wedge)$. The remaining part
of the proof is
based on the separation of variables technique, cf.~\cite{Triebel}, where
for fixed $r_1 > 0$ we perform an expansion of the solution in terms of spherical
harmonics
\[
 u(r_1,\theta_1,\phi_1) = \sum_{l=0}^\infty \sum_{m=-l}^l \bigl( u(r_1,\cdot), Y_{lm}
\bigr)_{L_2(S^2)} Y_{lm}(\theta_1,\phi_1) ,
\]
which converges absolutely and uniformly on $S^2$. The spherical harmonics
are eigenfunctions of $\Delta_{X_1}$ with eigenvalues $-l(l+1)$ and represent a complete
orthogonal basis in $L_2(S^2)$.
Therefore
\[
 \sum_{l=0}^\infty \sum_{m=-l}^l \left[ \partial_{r_1}^2 +
\frac{2}{r_1} \partial_{r_1}
 - \frac{l(l+1)}{r_1^2} + C \right] \bigl( u(r_1,\cdot), Y_{lm} \bigr)_{L_2(S^2)}
Y_{lm}(\theta_1,\phi_1) =0 ,
\]
and $u$ is a nontrivial solution if and only if at least one of the ordinary
differential equations
\[
 \left[ d^2 + \frac{2}{r_1} d
 - \frac{l(l+1)}{r_1^2} + C \right] f_l(r_1) =0 , \ \ \mbox{with} \ l \in
\mathbb{N}_0 ,
\]
has a classical solution $f_l$. After the substitution $f_l(r_1) =
r_1^{-\frac{1}{2}} \chi(r_1)$, the equation
becomes
\[
 r_1^2 \chi''(r_1) + r_1 \chi'(r_1) + \left( C r_1^2 - (l+\tfrac{1}{2})^2 \right)
\chi(r_1) =0 .
\]
According to our assumption on the covariables of the edge, $(\tau,\Theta_2,\Phi_2)
\neq 0$, we have
$-C > 0$ and a change of variables $z := \sqrt{-C} r_1$ with $w(z) :=
\chi(z/\sqrt{-C})$ yields the final equation
\[
 z^2 w''(z) + z w'(z) - \left( z^2 + (l+\tfrac{1}{2})^2 \right) w(z) =0 .
\]
The classical solutions of this equation are the modified Bessel functions
$I_{\pm(l+\tfrac{1}{2})}$ and
$K_{l+\tfrac{1}{2}}$, cf.~\cite[p. 374]{AS}. These functions have asymptotic
expansions for $z \rightarrow \infty$
\[
 I_{\pm(l+\tfrac{1}{2})}(z) \sim \frac{e^z}{\sqrt{2 \pi z}} \left( 1 + \mathcal{
O}(\tfrac{1}{z}) \right)
 \ \ \mbox{and} \
 K_{l+\tfrac{1}{2}}(z) \sim e^{-z} \frac{\pi}{\sqrt{2 z}} \left( 1 + \mathcal{
O}(\tfrac{1}{z}) \right) ,
\]
and therefore $r_1^{-\frac{1}{2}}I_{\pm(l+\tfrac{1}{2})}(\sqrt{-C} r_1) Y_{lm}
\notin \mathcal{K}^{2,\gamma}((S^2)^\wedge)$
for all $\gamma \in \mathbb{R}$.
The functions $K_{l+\tfrac{1}{2}}$ have asymptotic expansions
\[
 K_{l+\tfrac{1}{2}}(z) \sim \tfrac{1}{2} \Gamma(l+\tfrac{1}{2}) \left( \tfrac{1}{2}
z \right)^{-(l+\tfrac{1}{2})}
 \ \ \mbox{for} \ z \rightarrow 0 .
\]
Together with the recursion relations for the derivatives, cf.~\cite{AS},
\[
 K'_{l+\tfrac{1}{2}}(z) = -\frac{1}{2} \left( K_{l-\tfrac{1}{2}}(z) +
K_{l+\tfrac{3}{2}}(z) \right)
\]
and
\[
 K''_{l+\tfrac{1}{2}}(z) = \frac{1}{4} \left( K_{l-\tfrac{3}{2}}(z) + 2
K_{l+\tfrac{1}{2}}(z) + K_{l+\tfrac{5}{2}}(z) \right) ,
\]
it is easy to see that
\[
 r_1^{-\frac{1}{2}} K_{l+\tfrac{1}{2}}(\sqrt{-C} r_1) Y_{lm} \notin \mathcal{
K}^{2,\gamma}((S^2)^\wedge) \ \ \mbox{for} \
\gamma \geq -l + \tfrac{1}{2} ,
\]
i.e., the kernel contains in particular no nontrivial solution for $\gamma \geq
\tfrac{1}{2}$ and
\begin{equation}
 \ker \sigma_\wedge(H|_{U_1}) = \left\{ K_{l+\tfrac{1}{2}} : l \in \mathbb{N}_0 \
\mbox{and} \
  0 \leq l < -\gamma + \frac{1}{2} \right\} .
\label{kerH}
\end{equation}

It remains to calculate the dimension of the cokernel of the principal edge symbol
$\sigma_\wedge(H|_{U_1})$
which equals the dimension of the kernel of the adjoint operator
$\sigma_\wedge(H|_{U_1})^\ast: \mathcal{K}^{0,2-\gamma}((S^2)^\wedge) \rightarrow \mathcal{
K}^{-2,-\gamma}((S^2)^\wedge)$.
Weighted Sobolev spaces with $s<0$ are defined via a non-degenerate scalar product,
cf.~\cite{Schulze98},
and the identification of $\mathcal{K}^{-s,-\gamma}((S^2)^\wedge)$ with the dual space
$\mathcal{K}^{s,\gamma}((S^2)^\wedge)^\ast$
follows from similar reasoning as for standard Sobolev spaces, cf.~\cite{Yosida}.
By the same argument as before every solution of $\sigma_\wedge(H|_{U_1})^\ast u=0$ belongs
to
$\mathcal{K}^{\infty,2-\gamma}((S^2)^\wedge)$. The adjoint differential operator
$\sigma_\wedge(H|_{U_1})^\ast$ therefore coincides with $\sigma_\wedge(H|_{U_1})$
and it follows from our previous computation that the kernel of
$\sigma_\wedge(H|_{U_1})^\ast$
contains no nontrivial solution for $\gamma \leq \tfrac{3}{2}$. In general we have
\begin{equation}
 \ker \sigma_\wedge(H|_{U_1})^\ast = \left\{ K_{l+\tfrac{1}{2}} : l \in \mathbb{N}_0 \
\mbox{and} \
  0 \leq l < \gamma - \frac{3}{2} \right\} .
\label{kerH*}
\end{equation}
Comparing (\ref{kerH}) and (\ref{kerH*}), we get the isomorphy of $\sigma_\wedge(H|_{U_1})$ 
for $\frac{1}{2} < \gamma < \frac{3}{2}$.
\end{proof}

We want to close with a remark concerning
the appropriate choice of $\gamma$ which is a crucial issue of our approach to construct
the asymptotic parametrix and Green operator explicitly. Restrictions on $\gamma$
are imposed by ellipticity of the edge degenerate operator according to Lemma \ref{lem1}. 
It is however not obvious that
these values of $\gamma$ are appropriate for eigenfunctions of the Hamiltonian.
The spin-free eigenfunctions of the helium atom are either symmetric or antisymmetric
with respect to an interchange of the electron coordinates. It is a standard
result in the spectral theory of Schr\"odinger operators that the lowest eigenstate $\Psi_0$
must be nondegenerate and strictly positive, cf.~\cite[Theorem XII.46]{RSIV} and following Corollary.
In particular \cite[Theorem 1.2]{HO2S94} gives
\[
 \Psi_0 = c \left( 1 + O(s \ln s) \right) , \ \ \mbox{with} \ c>0 ,
\]
where $s$ denotes the Euclidean distance between two electrons in $\mathbb{R}^6$. The distance parameter
$r$ in hyperspherical coordinates refers to the geodesic distance to an edge on $S^5$.
A simple calculations shows that $s = tr + O(tr^3)$ for $r \rightarrow 0$. Therefore,
one gets $\Psi_0(t,r,\phi_1,\theta_1,\phi_2,\theta_2)|_{t,\phi_2,\theta_2= \
\mbox{\scriptsize const.}} \notin \mathcal{K}^{s,\gamma} \bigl( (S^2)^\wedge \bigr)$ for $\gamma \geq \frac{3}{2}$.
Vice versa, a natural lower bound for $\gamma$ is derived from the following consideration.
The Hamiltonian of the helium atom can be defined as a selfadjoint operator corresponding
to the quadratic form
\[
 h(u) := \tfrac{1}{2} \langle \nabla u, \nabla u \rangle + \langle V u, u \rangle
\]
with form domain $Q(h) = Q(\Delta) \cap Q(V)$. As an edge degenerate operator, the Hamiltonian acts on
an egde Sobolev space 
$\mathcal{W}_{P,\mbox{\footnotesize comp}}^{s} \bigl( Y, \omega \mathcal{K}^{s,\gamma}_P \bigl( (S^2)^\wedge \bigr) \bigr)$
for some asymptotic type $P$, cf.~\cite{Schulze98}. 
In order to check whether
$\mathcal{W}_{P,\mbox{\footnotesize comp}}^{s} \bigl( Y, \omega \mathcal{K}^{s,\gamma}_P \bigl( (S^2)^\wedge \bigr) \bigr)
\subset Q(V)$ it is sufficient to consider $\alpha := \mbox{max} \{\Re p_j \}$ with $\alpha < \frac{3}{2} -\gamma$.
In a neighbourhood of a Coulomb singularity one gets $\alpha < 1$ and therefore we can restrict ourselves
to $\mathcal{W}_{P,\mbox{\footnotesize comp}}^{s} \bigl( Y, \omega \mathcal{K}^{s,\gamma}_P \bigl( (S^2)^\wedge \bigr) \bigr)$
with $\gamma > \frac{1}{2}$.

\end{document}